\newcommand{\bburl}[1]{\textcolor{blue}{\url{#1}}}
\newcommand{\be}{\begin{equation}}
\newcommand{\ee}{\end{equation}}
\newcommand{\bea}{\begin{eqnarray}}
\newcommand{\eea}{\end{eqnarray}}
\newtheorem{thm}{Theorem}[section]
\newtheorem{cor}[thm]{Corollary}
\newtheorem{que}[thm]{Question}
\numberwithin{equation}{section}
\begin{document}

\title{Additive complements for two given asymptotic densities}
\author{H\`ung Vi\d{\^e}t Chu}
\email{\textcolor{blue}{\href{mailto:hungchu2@illinois.edu}{hungchu2@illinois.edu}}}
\address{Department of Mathematics, University of Illinois at Urbana-Champaign, Urbana, IL 61820, USA}

\begin{abstract}
Let $0\le \alpha \le \beta\le 1$. 
For any finite set $B\subset\mathbb{N}$, we show that there exists a set $A\subset\mathbb{N}$ such that $\underline{d}(A+B) = \alpha$ and $\overline{d}(A+B) = \beta$, where $\underline{d}(A+ B)$ and $\overline{d}(A+B)$ are the lower and upper asymptotic densities of the set $A+B$, respectively. This partially answers a question by Faisant et al. A theorem involving the so-called highly sparse sets was proved in the previous arXiv version of this note; however, as pointed out by Sai Teja Somu, the proof of the theorem was flawed. The theorem is now an open question. 
\end{abstract}

\subjclass[2010]{11B05}

\keywords{asymptotic density, additive complements}

\maketitle

\section{Introduction}

A popular problem in the field of additive combinatorics is to construct a set satisfying certain density requirements. We use the asymptotic density defined as follows: given any set $A\subset \mathbb{N}$ of natural numbers, the lower asymptotic density $\underline{d}(A)$ and the upper asymptotic density $\overline{d}(A)$ are 
\begin{align*}
    \underline{d}(A) \ =\ \liminf\limits_{n\rightarrow \infty} \frac{\# A\cap [1, n]}{n} \mbox{ and } \overline{d}(A) \ =\ \limsup\limits_{n\rightarrow \infty} \frac{\# A\cap [1, n]}{n},
\end{align*}
respectively. If $\underline{d}(A) = \overline{d}(A)$, then we say $A$ has the asymptotic density, denoted by $d(A)$, and $d(A) = \underline{d}(A) = \overline{d}(A)$. Faisant et al.\ \cite{FGPS} considered the following question. 
\begin{que}\normalfont\label{q1}
Let $B\subset \mathbb{N}$ be a set with $d(B) = 0$ and a fixed real number $0<\alpha <1$, is there always a set $A\subset \mathbb{N}$ such that $d(A+B) = \alpha$? If not, what additional condition(s) should we put on $B$ so that such a set $A$ exists?
\end{que}
Faisant et al.\ \cite{FGPS} proved that if $B\subseteq \mathbb{N}$ is finite, there exists $A\subseteq \mathbb{N}$ with $d(A + B) = \alpha$ for any $\alpha\in [0,1]$. Then such a set $A$ is called an additive complement of $B$ with respect to density $\alpha$. (Note that our definition of additive complements is different from the one used by Chen and Fang \cite{CF}.) Recently, Leonetti and Tringali \cite{LT} generalized the result by considering quansi-densities and showing the existence of $A$ when $B$ is finite. 

In this note, we investigate the situation when two possibly distinct density values $0\le \alpha\le  \beta\le 1$ are given. Questions involving two given density values are also popular in the literature. For example, Bienvenu and Hennecart \cite{BH} solved the question of the existence of a set $A\subset \mathbb{N}$ with $d(A) = \alpha$ and $d(A+A) = \beta$. Hegyv\'{a}ri et al. \cite{HHP} proved the existence of a set $A\subset\mathbb{N}$ such that $d(A) = 0$, $\underline{d}(A\cdot A) = \alpha$, and $\overline{d}(A\cdot A)=\beta$. As we shall see later, we borrow the technique used by Hegyv\'{a}ri et al. in \cite[Theorem 4.4]{HHP} to partially answer the following question. 

\begin{que}\normalfont\label{q2}
Let $B\subset \mathbb{N}$ be a set with $d(B) = 0$ and fixed real numbers $0\le \alpha \le \beta \le 1$, is there always a set $A\subset \mathbb{N}$ such that $\underline{d}(A+B) = \alpha$ and $\overline{d}(A+B) = \beta$? If not, what condition(s) should we put on $B$ so that such a set $A$ exists?
\end{que}

\begin{thm}\label{m3}
Let $B\subset \mathbb{N}$ with $d(B) = 0$ and fixed real numbers $0\le \alpha\le \beta\le 1$. If there exists a set $C\subset\mathbb{N}$ such that $d(C+B) = \beta$, then there exists a set $A\subset \mathbb{N}$ such that $\underline{d}(A+B) = \alpha$ and $\overline{d}(A+B) = \beta$.
\end{thm}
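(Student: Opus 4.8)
The plan is to build $A$ by interleaving two reference sets on a very rapidly increasing sequence of scales: the given set $C$, which forces the density of the sumset up toward $\beta$, and a ``floor'' set $C'\subseteq C$ with $d(C'+B)=\alpha$, which prevents the density from dropping below $\alpha$. Concretely, I would fix an increasing sequence $1=t_0<t_1<t_2<\cdots$ and alternately declare each block $(t_{k-1},t_k]$ an \emph{up-block}, on which $A:=C$, or a \emph{down-block}, on which $A:=C'$. Since $C'\subseteq C$, this gives $C'\subseteq A\subseteq C$ throughout, hence $C'+B\subseteq A+B\subseteq C+B$, which already yields $\alpha=\underline d(C'+B)\le \underline d(A+B)$ and $\overline d(A+B)\le\overline d(C+B)=\beta$. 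It then remains only to make the density of $A+B$ actually reach $\beta$ at the right ends of up-blocks and $\alpha$ at the right ends of down-blocks.

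The control of the interleaving rests on one elementary inequality: for any finite $F\subseteq[1,N]$ and any $n$,
\begin{equation*}
|(F+B)\cap[1,n]|\ \le\ \sum_{c\in F}|B\cap[1,n-c]|\ \le\ |F|\cdot|B\cap[1,n]|,
\end{equation*}
so that, because $d(B)=0$, the influence of any fixed initial portion of $A$ on the sumset is $o(n)$. I would use this as follows. At the end $t_k$ of an up-block, $A$ differs from $C$ on $[1,t_k]$ only by the elements of $C\setminus C'$ discarded inside earlier down-blocks, a set contained in $[1,t_{k-1}]$; hence $|(C+B)\cap[1,t_k]|-|(A+B)\cap[1,t_k]|\le t_{k-1}\,|B\cap[1,t_k]|$, and choosing $t_k$ so large that $t_{k-1}|B\cap[1,t_k]|/t_k<1/k$ forces the density of $A+B$ at $t_k$ to within $1/k$ of $d(C+B)=\beta$. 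Symmetrically, at the end $t_k$ of a down-block, $A$ exceeds $C'$ on $[1,t_k]$ only by elements kept in earlier up-blocks, again a subset of $[1,t_{k-1}]$, so the same inequality together with $d(C'+B)=\alpha$ pins the density of $A+B$ at $t_k$ to within $o(1)$ of $\alpha$; combined with the containment floor $A\supseteq C'$ this makes it converge to $\alpha$. Thus $\overline d(A+B)=\beta$ and $\underline d(A+B)=\alpha$. Each breakpoint is simply taken enormous relative to the previous one, which is possible precisely because $|B\cap[1,n]|/n\to0$.

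Consequently the whole theorem reduces to the following \textbf{crux lemma}: given $C$ with $d(C+B)=\beta$ and any $\gamma\in[0,\beta]$, there is $C'\subseteq C$ with $d(C'+B)=\gamma$. The extreme cases are immediate, $C'=\emptyset$ for $\gamma=0$ and $C'=C$ for $\gamma=\beta$, so the content is $0<\gamma<\beta$. Here I would partition $\mathbb{N}$ into consecutive long blocks, on which the \emph{incremental} density of $C+B$ is automatically $\approx\beta$ (since $|(C+B)\cap[1,n]|=\beta n+o(n)$ makes each long block contribute $\approx\beta$ times its length), and keep $C$ on a sub-portion of relative length $\lambda=\gamma/\beta$ of each block, discarding the rest.

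I expect the genuine obstacle to be this last step: verifying that keeping a $\lambda$-proportion of $C$ really yields sumset-density $\gamma=\lambda\beta$ rather than something larger, because a single large $b\in B$ can translate a kept dense stretch onto a discarded one and inflate the density there. This is where $d(B)=0$ must be used quantitatively, and there is a real tension in the choice of block lengths: the blocks must be long enough that the incremental density of $C+B$ has stabilized to $\beta$, yet the kept-fraction must converge to $\lambda$ at \emph{every} scale, not merely along block endpoints, which fails if the lengths grow so fast that the final block dominates. Resolving this will require choosing the block lengths adaptively against the convergence rate of $|(C+B)\cap[1,n]|/n\to\beta$ and bounding the cross-block redistribution of sumset mass by the same $|F|\cdot|B\cap[1,n]|$ estimate; I regard making this balance precise as the main work of the proof.
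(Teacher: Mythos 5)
Your interleaving step is sound as far as it goes: if a ``floor'' set $C'\subseteq C$ with $d(C'+B)=\alpha$ existed, then the sandwich $C'\subseteq A\subseteq C$ together with the estimate $(F+B)(n)\le |F|\,B(n)$ for $F\subseteq[1,t_{k-1}]$ would indeed pin $\underline{d}(A+B)=\alpha$ and $\overline{d}(A+B)=\beta$ along rapidly growing breakpoints. But the entire proof then rests on your crux lemma --- the existence of $C'\subseteq C$ with $d(C'+B)=\gamma$ for prescribed $\gamma\in(0,\beta)$ --- and this you do not prove. The proportional-block sketch you offer fails for exactly the reason you flag: translates of kept stretches by large elements of $B$ land on discarded regions. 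For instance, with $C$ the even numbers and $B=\{0,1\}\cup\{2^k: k\ge 1\}$ (so $d(B)=0$ and $d(C+B)=1$), the shifts $C'+2^k$ of kept sub-blocks can cover discarded blocks at infinitely many scales, so keeping a $\lambda$-proportion gives no control whatsoever of $d(C'+B)$, and you exhibit no block choice that does. Worse, the crux lemma demands an \emph{exact} asymptotic density $\gamma$ (liminf and limsup both equal to $\gamma$), which is not a routine auxiliary step: it is at least as delicate as the theorem itself, and it does not follow from the theorem either, since the theorem's hypothesis only supplies a set whose sumset has density $\beta$, not $\gamma<\beta$. So the reduction leaves the genuine difficulty entirely unresolved --- as you yourself acknowledge by calling it ``the main work of the proof.''

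The paper's proof shows how to avoid needing any exact-$\alpha$ floor set. It alternates phases directly on $C$, building $C_{k+1}$ from $C_k$: in a ``down'' phase it deletes \emph{all} of $C_k$ in an entire interval $[n_k+1,n_{k+1}-1]$, so that up to the next breakpoint the surviving set is the finite set $C_k\cap[1,n_k]$ plus an untouched tail; since $d(B)=0$, one has $(C_k\cap[1,n_k]+B)(u)=o(u)$, so the running ratio drops below $\alpha+\gamma_{k+1}$ somewhere, and choosing the deleted interval minimal maintains the invariant $(C_k+B)(n)\ge(\alpha+\gamma_k)n$ for all $n\ge N$, which gives $\underline{d}(A+B)\ge\alpha$ at \emph{every} $n$, while the dip points force $\underline{d}(A+B)\le\alpha$. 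In an ``up'' phase it simply waits: since $C_k$ agrees with $C$ beyond $n_k$ and $([1,n_k]+B)(n)\le n_k\,B(n)=o(n)$, the sumset of $C_k$ still has density $\beta$, so the ratio eventually exceeds $\beta-1/k$. Thus the liminf is enforced by a construction invariant rather than by containment of a prebuilt set of exact sumset density $\alpha$; the only quantitative input is the same ``finite set plus $B$ is sparse'' bound you already use. If you want to salvage your architecture, the realistic route is to prove your crux lemma by precisely this kind of alternating, invariant-maintaining deletion argument rather than by proportional block-keeping.
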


The following corollary answers a part of Question Q5 in \cite{FGPS}. The corollary is immediate from the result by Faisant et al.\ and Theorem \ref{m3}.

\begin{cor}
Let $0\le \alpha\le \beta \le 1$. If $B$ is finite, then there exists a set $A\subset\mathbb{N}$ such that $\underline{d}(A+B) = \alpha$ and $\overline{d}(A+B) = \beta$.
\end{cor}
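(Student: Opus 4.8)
The plan is to obtain the corollary by chaining the two main theorems, feeding the auxiliary set produced by Theorem~\ref{m1} into the hypothesis of Theorem~\ref{m3}. Since $B$ is highly sparse, I would first record that $d(B)=0$, which we may take as known from the properties of highly sparse sets (as noted immediately before the statement, and developed in the next section). This is the only place where the sparseness of $B$ enters, and it is needed precisely so that the standing hypothesis $d(B)=0$ of Theorem~\ref{m3} is met.

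Next I would apply Theorem~\ref{m1} to the highly sparse set $B$ with target density $\beta\in[0,1]$. This produces a set $C\subset\mathbb{N}$ with $d(C+B)=\beta$. Thus $C$ furnishes exactly the object required by the hypothesis of Theorem~\ref{m3}, namely a set whose sumset with $B$ has density equal to $\beta$.

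Finally, with $d(B)=0$, the inequalities $0\le\alpha\le\beta\le 1$, and the set $C$ satisfying $d(C+B)=\beta$ all in hand, I would invoke Theorem~\ref{m3} directly. It yields a set $A\subset\mathbb{N}$ with $\underline{d}(A+B)=\alpha$ and $\overline{d}(A+B)=\beta$, which is exactly the assertion of the corollary.

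I do not expect a genuine obstacle: the corollary is a formal composition of the two theorems, and all of the substantive work has already been carried out in their proofs. The only points deserving any care are the verifications that the hypotheses of Theorem~\ref{m3} are satisfied, namely the existence of the set $C$ (supplied by Theorem~\ref{m1}) and the vanishing density $d(B)=0$ (a consequence of $B$ being highly sparse).
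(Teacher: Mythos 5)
Your proposal is correct and matches the paper exactly: the author derives the corollary as an immediate consequence of Theorem~\ref{m1} (applied with target density $\beta$ to produce the set $C$), Theorem~\ref{m3}, and the fact that highly sparse sets have asymptotic density $0$ (Proposition~\ref{p1}, property (P2)). Your verification of the hypotheses of Theorem~\ref{m3} is the same chaining the paper intends.
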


Given a set $A\subset \mathbb{N}$, we let $A(n)$ denote $\# A\cap [1, n]$, the cardinality of $A\cap [1, n]$.

\section{Proof of Theorem \ref{m3}}
We borrow the technique used in \cite[Theorem 4.4]{HHP} by creating a sequence of subsets of $C$. However, because our original set $C$ does not necessarily have $d(C) = 0$ as the original set $Q$ in \cite[Theorem 4.4]{HHP}, we exploit the fact that $d(B) = 0$ instead.

If $\alpha = \beta$, then we are done by setting $A = C$. Assume that $\alpha < \beta$. Let $(\gamma_k)_{k=1}^\infty$ be a strictly decreasing sequence such that $0< \gamma_k < \beta - \alpha$ and $\lim_{k\rightarrow\infty}\gamma_k \rightarrow 0$.

We shall form a sequence of subsets of $C$, called $(C_k)_{k=1}^\infty$, and an increasing sequence of positive integers $(n_k)_{k=1}^\infty$. In particular, $C_{k+1}$ is formed from $C_k$ by dropping integers in the range $[n_k+1, n_{k+1}]$. Let $C_1 = C$ and $n_1 = 1$. Since $d(C + B) = \beta$, we can choose $N$ such that for all $n \ge N$, we have $(C_1+B)(n) \ge (\alpha+\gamma_1) n$. We will build $(C_k)_{k=1}^\infty$ recursively such that $(C_k+B)(n)\ge (\alpha + \gamma_k) n$ for all $n\ge N$. 

Recursion: suppose that we already define $C_k$ and $n_k$. Then $C_{k+1}$ and $n_{k+1}$ are defined according to the parity of $k$. 

If $k$ is odd, let $n_{k+1} > n_k$ be the smallest positive integer such that there exists $n \ge N$ and 
$$\frac{(C_k\backslash [n_k+1, n_{k+1}] + B)(n)}{n} \ <\ \alpha + \gamma_{k+1}.$$ To see that $n_{k+1}$ is well-defined, we consider the function 
$$f(u) \ :=\ \frac{(C_k\backslash [n_k+1, u] + B)(u)}{u} \ =\ \frac{(C_k(n_k) + B)(u)}{u}.$$
Since $C_k(n_k)$ is finite and $d(B) = 0$, it follows that $\lim_{u\rightarrow\infty} f(u) = 0$ and so, there exists $u_0 > \max\{N, n_k\}$ verifying $f(u_0) < \alpha + \gamma_{k+1}$. Hence, $n_{k+1}\le u_0$. Set $C_{k+1} = C_k\backslash [n_{k}+1, n_{k+1}-1]$.

If $k$ is even, let $C_{k+1} = C_k$ and $n_{k+1} > n_k$ be the smallest positive integer for which $$\frac{(C_k+B)(n_{k+1})}{n_{k+1}} \ >\ \beta - 1/k.$$
To show that $n_{k+1}$ is well-defined, it suffices to show that $d(C_k+B) = \beta$. Note that $C_k\backslash [1, n_k] = C\backslash [1, n_k]$. Hence, for $n > n_k$, 
$$\frac{(C+B)(n)}{n} \ \ge\ \frac{(C_k + B)(n)}{n} \ \ge\ \frac{(C+B)(n) - ([1, n_k] + B)(n)}{n}.$$
Since $d(B) = 0$ and $d(C+B) = \beta$, we know that as $n\rightarrow\infty$, the left and right sides go to $\beta$, and so $d(C_k+B) = \beta$.

Set $A = \cap_{k=1}^\infty C_k$. Note that $A(n_k) = C_k(n_k)$. We show that $\underline{d}(A+B) = \alpha$ and $\overline{d}(A+B) = \beta$.

\begin{enumerate}
    \item Property 1: $\underline{d}(A+B) \ge \alpha$. 
    Indeed, let $n\ge N$ and choose $n_k \ge n$. By construction, we have
    $$(A+B)(n) \ =\ (C_k + B)(n) \ \ge\ (\alpha+\gamma_k) n,$$
    which implies Property 1 since $n$ is arbitrary.
    \item Property 2: $\underline{d}(A+B) \le \alpha$. Let $k$ be odd. By construction, there exists $n > N$ such that 
    \begin{align} \label{e111} (C_{k+1}\backslash \{n_{k+1}\} + B)(n)  \ < \ (\alpha +\gamma_{k+1}) n.\end{align}
    We claim that $n > n_k$; otherwise, $n\le n_k$, along with \eqref{e111}, would imply that $(C_{k} + B)(n)  <  (\alpha + \gamma_k) n$, which contradicts our construction of $C_k$. We have
    \begin{align*}
        &(A+B)(n) - (C_{k+1}\backslash \{n_{k+1}\} + B)(n)\\
        \ \le\ &(A+B)(n) - (A\backslash \{n_{k+1}\} + B)(n) \ \le\ B(n).
    \end{align*}
    Therefore, $(A+B)(n) \le B(n) + (\alpha+\gamma_{k+1}) n = n(B(n)/n + \alpha + \gamma_{k+1})$. As $k\rightarrow\infty$, $n\rightarrow\infty$ since $n > n_k$, which implies Property 2. 
    \item Property 3: $\overline{d}(A+B) \ge \beta$. Let $k$ be even. By construction, we have $$(A+B)(n_{k+1})  \ =\  (C_{k+1}+B)(n_{k+1}) \ \ge\ (\beta-1/k)n_{k+1}.$$
\end{enumerate}
From Properties 1, 2, 3 and the fact that $\overline{d}(A+B)\le d(C+B) = \beta$, we conclude that $\underline{d}(A+B) = \alpha$ and $\overline{d}(A+B) = \beta$, as desired.

\begin{que}\normalfont
A set $B = \{b_1, b_2, \ldots\}$ is said to be highly sparse if $B$ is either finite or infinite with $b_{j+1}/b_j \rightarrow \infty$ as $j\rightarrow \infty$.
Let $\alpha \in [0,1]$. 
For a highly sparse set $B$, is it true that there always exists $A\subset\mathbb{N}$ such that $d(A+B) = \alpha$? Theorem 1.5 in the previous arXiv version attempts to answer this question in the positive, but the bound for $\#U$ is wrong as it only holds when $b_1\neq b_2$.
\end{que}

\section*{Acknowledgement}
The author is thankful to Kevin Ford for fruitful discussions and to the anonymous referee for helpful comments. 

\end{document}